\theoremstyle{plain}
\newtheorem{theorem}{Theorem}[section]
\newtheorem{lemma}[theorem]{Lemma}
\newtheorem{proposition}[theorem]{Proposition}
\theoremstyle{definition}
\newtheorem{remark}[theorem]{Remark}
\newtheorem{example}[theorem]{Example}
\newcommand{\MM}{\mathcal M}
\newcommand{\BM}{\overline{\mathcal M}}
\newcommand{\PP}{\mathbb P}
\newcommand{\BHH}{\overline{\mathcal H}}
\newcommand{\CC}{\mathcal C}
\newcommand{\OO}{\mathcal O}
\newcommand{\RR}{\mathbb R}
\newcommand{\HH}{\mathcal H}
\newcommand{\TT}{\mathcal T}
\newcommand{\BEff}{\overline{\operatorname{Eff}}}
\newcommand{\other}{\operatorname{other}}
\newcommand{\Pic}{\operatorname{Pic}}
\newcommand{\bbC}{\mathbb C}
\newcommand{\bbE}{\mathbb E}
\newcommand{\bbQ}{\mathbb Q}
\newcommand{\bbF}{\mathbb F}
\newcommand{\rank}{\operatorname{rank}}
\newcommand{\SL}{\operatorname{SL}}
\title[abelian differentials and the Teichm\"uller dynamics]{Strata of abelian differentials and the Teichm\"uller dynamics}
\begin{document}
\bibliographystyle{halpha}

\author{Dawei Chen}

\address{Department of Mathematics, Boston College, Chestnut Hill, MA 02467, USA}

\email{dawei.chen@bc.edu}

\thanks{During the preparation of this work the author is partially supported by NSF grant DMS-1200329. }

\begin{abstract}
This paper focuses on the interplay between the intersection theory and the Teichm\"uller dynamics on the moduli space of curves. As applications, 
we study the cycle class of strata of the Hodge bundle, present an algebraic method to calculate the class of the divisor parameterizing abelian differentials with a non-simple zero, and verify a number of extremal effective divisors on the moduli space of pointed curves in low genus. 
\end{abstract}

\maketitle

\section{Introduction}

Let $\HH$ be the Hodge bundle parameterizing $(C, \omega)$, where $C$ is a smooth curve of genus $g$ and $\omega$ is an abelian differential on $C$, i.e. a section of the canonical line bundle. On the moduli space $\MM_g$ of genus $g$ curves, $\HH$ is a vector bundle of rank $g$. Let $\mu = (m_1, \ldots, m_n)$ be a partition of $2g-2$. Denote by $\HH(\mu)$ the locus of pairs $(C, \omega)$ in $\HH$ such that $(\omega)_0 = \sum_{i=1}^n m_i p_i$ for distinct points $p_1, \ldots, p_n$ in $C$. It is well-known that $\HH(\mu)$ is a submanifold of $\HH$ whose local coordinates are given by the relative periods, see \cite{kontsevich}.  

There is an $\SL_2(\RR)$-action on $\HH$, called the Teichm\"uller dynamics, that changes the flat structure of $C$ induced by $\omega$. A central question in the study of Teichm\"uller dynamics is to understand the structure of its orbit closures. What are their dimensions? Do they possess a manifold structure? What are the associated dynamical quantities, such as Lyapunov exponents and Siegel-Veech constants? We refer to \cite{emz, kz03, ekz} for a comprehensive introduction to these subjects. 

Although the questions are analytic in nature, recently there have been some attempts using tools in algebraic geometry to study them. For instance, if the projection of an orbit forms an algebraic curve in $\MM_g$, we call it a Teichm\"uller curve. A decade ago Kontsevich and Zorich made a conjecture about the 
non-varying phenomenon of sums of Lyapunov exponents for Teichm\"uller curves in low genus. Marking the zeros of an abelian differential, one can lift a Teichm\"uller curve to the Deligne-Mumford moduli space $\BM_{g,n}$ of stable genus $g$ curves with $n$ marked points. In \cite{chenmoeller} the conjecture was proved by calculating the intersection of Teichm\"uller curves with divisor classes on $\BM_{g,n}$. Furthermore in \cite{chenmoellerzagier} the authors consider a special type of higher dimensional orbit closures given by torus coverings. The upshot also relies on certain intersection calculation on the Hurwitz space compactified by admissible covers 
\cite[Chapter 3.G]{harrismorrison}. 

A complete classification of the orbit closures is still missing. Nevertheless, the $\SL_2(\RR)$-action preserves every stratum $\HH(\mu)$, hence it is already interesting to study these strata from the viewpoint of intersection theory. To set it up algebraically, we projectivize $\HH$ by modulo $\bbC^{*}$. In other words, the projectivization $\PP \HH$ parameterizes canonical divisors instead of abelian differentials. As before if we mark the zeros of a differential, one can lift a stratum $\PP\HH(\mu)$ to $\MM_{g,n}$. 
Then the first step of the framework is to understand its cycle class in the Chow ring of $\MM_{g,n}$. In Section~\ref{sec:strataclass} we answer this question as a consequence of the Porteous formula. 

Note that $\PP\HH$ extends to the boundary of $\BM_g$ as a projective bundle $\PP\BHH$. Alternatively, one can take the closure of a stratum in $\PP\BHH$ and study its cycle class. A precise description for the boundary of $\PP\BHH(\mu)$ is still unknown in general, which causes a technical obstruction. However for codimension-one degeneration, i.e. for $\mu = (2, 1^{2g-4})$, the stratum $\PP\BHH(2, 1^{2g-4})$ is a divisor in $\PP\BHH$. In Section~\ref{sec:divisorclass} we are able to calculate its class (Proposition~\ref{prop:classinterior} and Theorem~\ref{thm:class}). Understanding this divisor is useful from a number of aspects, e.g. in \cite{hamenstadt} it was used to detect signatures of surface bundles. We remark that its divisor class was first calculated in \cite[Theorem 2]{kztau} by an analytic approach using the Tau function. Our method is purely algebraic, hence it provides additional information regarding the birational geometry of $\PP\BHH$. For instance, it is often useful but difficult to find an extremal effective divisor intersecting the interior of a moduli space. The existence of such a divisor can provide crucial information for the birational type of the moduli space. As a by-product of our intersection theoretical approach, we show that $\PP\BHH(2, 1^{2g-4})$ lies on the boundary of the pseudo-effective cone of $\PP\BHH$ (Proposition~\ref{prop:rigid}). 

In Section~\ref{sec:extremal} we reverse our engine, using the Teichm\"uller dynamics to study effective divisors on the moduli space of curves. 
The history of studying effective divisors on $\BM_g$ dates back to \cite{harrismumford}, where Harris and Mumford used the Brill-Noether divisor parameterizing 
curves with exceptional linear series to show that $\BM_g$ is of general type for large $g$. Later on Logan studied a series of pointed Brill-Noether divisors on $\BM_{g,n}$ \cite{Logan}. As mentioned above, we would like to understand whether those divisors are extremal. By checking their intersections with various 
Teichm\"uller curves, we prove the extremality for a number of pointed Brill-Noether divisors in low genus (Theorems~\ref{thm:n=1} -- \ref{thm:n=5}). 

{\bf Acknowledgments:} The author thanks Martin M\"oller for many stimulating discussions about the Teichm\"uller dynamics, thanks Peter Zograf for clarifying a sign issue in the divisor class of $\PP\BHH(2, 1^{2g-4})$, and thanks Izzet Coskun and David Jensen for helpful comments on the extremal divisors in Section~\ref{sec:extremal}. Part of the work was done when the author visited the Mathematical Sciences Center at Tsinghua University in Summer 2012. The author thanks their invitation and hospitality. 

\section{Classes of the strata}
\label{sec:strataclass}

The calculation in this section is standard to an algebraic geometer. But we still write down everything in detail for the readers who are only familiar with the dynamical side of the story. Let us first consider a more general setting. 

Denote by $\mu = (m_1, \ldots, m_n)$ a partition of a positive integer $d$. Let $BN^r_{\mu}$ be the locus of $(C, p_1, \ldots, p_n)$ in $\MM_{g,n}$ such that 
the divisor $D = \sum_{i=1}^n m_i p_i$ in $C$ satisfies $$h^0(C, D) \geq r+1. $$ 
Here the notation $BN$ stands for the Brill-Noether divisors, see \cite[Chapter V]{ACGH} for a comprehensive introduction to the Brill-Noether theory. Below we calculate the class of $BN^r_{\mu}$ in the Chow ring of $\MM_{g,n}$. 

Let $\CC^{n}$ denote the $n$-fold fiber product of the universal curve $\CC$ over $\MM_g$. Let 
$$\pi: \CC^{n+1} \to \CC^n$$ 
be the projection forgetting the last factor. Let $\Omega_i$ be the pullback of the dualizing sheaf via the $i$th projection and denote its first Chern class by $\omega_i$. Define 
$$\Delta_{\mu} = \sum_{i=1}^n m_i \Delta_{i, n+1}, $$ 
where $\Delta_{i,j}$ is the diagonal corresponding to $p_i = p_j$. 

Define a sheaf $\bbF_{\mu}$ on $\CC^{n}$ as 
$$ \bbF_{\mu} = \pi_{*}(\OO_{\Delta_{\mu}} \otimes \Omega_{n+1}). $$
The stalk of $\bbF_{\mu}$ at a point $(C, p_1, \ldots, p_n)$ can be identified with 
$$ H^0(C, K/K(- D)) \cong \mathbb C^{d}, $$
where $K$ is the canonical line bundle of $C$. Consequently $\bbF_{\mu}$ is a vector bundle of rank $d$. 
Let $\bbE$ be the pullback of the Hodge bundle on $\CC^{n}$, i.e. its fiber at $(C, p_1, \ldots, p_n)$ is canonically given by $H^0(C, K)$. We have an evaluation map 
$$ \phi: \bbE \to \bbF_{\mu}. $$ 
The locus where 
$$\rank(\phi) \leq d - r$$
parameterizes $(C, p_1, \ldots, p_n)$ satisfying
$$h^0(C, K(-D)) \geq g - d + r, $$ 
namely, $h^0(C, D) \geq r + 1$ by the Riemann-Roch formula. 

Recall the Porteous formula 
$$\Delta_{p,q} \Big(\sum_{i=0}^{\infty} c_i t_i\Big) = \det \left( \begin{array}{cccc} 
c_p & c_{p+1} & \cdots & c_{p+q-1} \\
c_{p-1} & c_p & \cdots & c_{p+q - 2} \\
\vdots & \vdots &  \ddots   & \vdots \\
c_{p-q+1} & c_{p-q+2} & \cdots & c_{p} 
\end{array}\right), $$
see \cite[Chapter II $\S4$ (iii)]{ACGH}. 
We conclude that the class of $BN^r_{\mu}$ in the Chow ring of $\MM_{g,n}$ 
is given by 
$$\Delta_{r, g- d + r} (c(\bbF_{\mu})/ c(\bbE)). $$

Let $\mu' = (m_1 - 1, m_2, \ldots, m_n)$ and similarly define $\bbF_{\mu'}$.  
We have a filtration 
$$ 0 \to \bbF_1 \to \bbF_{\mu} \to \bbF_{\mu'} \to 0 $$
where the fiber of $\bbF_1$ at $(C, p_1, \ldots, p_n)$ is 
$$ H^0(C, K(-D + p_1)/K(-D)), $$
namely, $\bbF_1$ is isomorphic to $\Omega_1^{\otimes m_1}$. 
Then by induction we have 
\begin{eqnarray*}
 c(\bbF_{\mu}) & = & \prod_{i=1}^n \prod_{j=1}^{m_i} (1 + j\omega_i) \\     
                          & = & 1 + \Big(\sum_{i=1}^n \frac{m_i(m_i+1)}{2} \omega_i\Big) + \cdots 
\end{eqnarray*}

Let $\lambda_i = c_i (\bbE)$ the $i$th Chern class of the Hodge bundle. 
We have 
\begin{eqnarray*}
\frac{1}{c(\bbE)} & = & 1 - \lambda_1 + (\lambda^2_1 - \lambda_2)  + \cdots  \\
                             & = & 1 - \lambda_1 + \frac{1}{2}\lambda_1^2 + \cdots
\end{eqnarray*}
where we use the relation $\lambda_2 = \lambda_1^2 / 2$. Indeed all the $\lambda_i$ can be expressed as polynomials of the Mumford class
$\kappa_1 = 12\lambda_1$ and these polynomials can be worked out explicitly in any given case, see \cite[Chapter 3.E]{harrismorrison}. 

\begin{example}
Let $\mu = (m_1, \ldots, m_n)$ be a partition of $g$. 
Consider the divisor $BN^1_{\mu}$ in $\MM_{g,n}$. In this case $r = 1$ and $d = g$, 
hence we conclude that the class of $BN^1_{\mu}$ in $\MM_{g,n}$ is 
$$c_1(\bbF) - c_1(\bbE) = - \lambda_1 + \sum_{i=1}^{n} \frac{m_i(m_i+1)}{2} \omega_i. $$
The class of the closure of $BN^1_{\mu}$ in $\BM_{g,n}$ was calculated in \cite[Theorem 5.4]{Logan}. As we see the results are the same
modulo boundary classes. 
\end{example}

Now we specialize to the strata of abelian differentials. Let $\mu = (m_1, \ldots, m_n)$ be a partition of $2g-2$. 
Marking the zeros of a differential, we can embed $\PP\HH(\mu)$ into $\MM_{g,n}$. In order to calculate its class, in the Porteous formula we have $r = g-1$ and $d = 2g-2$, hence the class of $\PP\HH(\mu)$ is given by 
$$ \Delta_{g-1,1}(c(\bbF_{\mu})/ c(\bbE)) = [c(\bbF_{\mu})/ c(\bbE)]_{g-1}. $$
More precisely, we have 
$$ c(\bbF_{\mu}) = \prod_{i=1}^{n} \prod_{j=1}^{m_i} (1 + j\omega_i), $$
$$ \frac{c(\bbF_{\mu})}{c(\bbE)} = \Big(\prod_{i=1}^{n} \prod_{j=1}^{m_i} (1 + j\omega_i)\Big)\cdot \Big(1 - \lambda_1 + \frac{1}{2}\lambda_1^2 + \cdots\Big). $$ 
Then the term of degree $g-1$ determines the desired class. 

\section{Abelian differentials with a non-simple zero}
\label{sec:divisorclass}

In this section we consider the locus $\PP\HH(2, 1^{2g-4})$ parameterizing canonical divisors with a zero of multiplicity $\geq 2$. 
Note that it forms a divisor in $\PP\HH$. The rational Picard group of $\PP\HH$ is generated by $\lambda$ and $\psi$, where $\lambda$ is the pullback of the Hodge class $\lambda_1$ from $\MM_g$ and $\psi$ is the class of the universal line bundle $\OO_{\PP\HH}(1)$. Here the projectivization $\PP V$ of a vector space $V$ parameterizes lines instead of hyperplanes. As a result, the divisor classes $\lambda$, $\psi$ and etc in our setting are the opposites of the corresponding classes in \cite{kztau}. 

\begin{proposition}
\label{prop:classinterior}
The divisor class of $\PP\HH(2, 1^{2g-4})$ in $\Pic_{\bbQ}(\PP\HH)$ is given by 
$$ \PP\HH(2, 1^{2g-4}) = (6g-6) \psi - 24\lambda. $$
\end{proposition}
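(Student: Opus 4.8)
The plan is to realize $\PP\HH(2,1^{2g-4})$ as the pushforward of a codimension-two degeneracy locus on the universal curve, in the same spirit as the Porteous calculation of Section~\ref{sec:strataclass}, and then invoke Mumford's relation. Let $\pi\colon \XX \to \PP\HH$ denote the universal curve (pulled back from $\MM_g$), with relative dualizing sheaf $\omega_\pi$, and let $q\colon \PP\HH \to \MM_g$ be the structure map. Base change identifies $\pi_*\omega_\pi$ with $q^*\HH$, so the tautological inclusion $\OO_{\PP\HH}(-1)\hookrightarrow q^*\HH$ yields, after pullback to $\XX$ and adjunction, a tautological section $s$ of
$$\LL := \omega_\pi \otimes \pi^*\OO_{\PP\HH}(1).$$
By construction the fiberwise zero divisor of $s$ is the canonical divisor parameterized by the corresponding point of $\PP\HH$.

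The key observation is that $\PP\HH(2,1^{2g-4})$ is precisely the image under $\pi$ of the locus in $\XX$ where $s$ vanishes to order $\geq 2$, i.e. where the first jet $j^1(s)$ vanishes. Using the relative principal parts sequence
$$0 \to \omega_\pi \otimes \LL \to J^1(\LL) \to \LL \to 0$$
(recall $\Omega_\pi = \omega_\pi$ for a family of smooth curves), the section $s$ induces a section $j^1(s)$ of the rank-two bundle $J^1(\LL)$, whose zero locus has expected codimension two in $\XX$ and class
$$c_2\bigl(J^1(\LL)\bigr) = c_1(\LL)\,c_1(\omega_\pi\otimes\LL) = \bigl(c_1(\omega_\pi)+\pi^*\psi\bigr)\bigl(2c_1(\omega_\pi)+\pi^*\psi\bigr).$$
First I would check that at a general point of $\PP\HH(2,1^{2g-4})$ the differential has a single double zero with the remaining zeros simple, so that $\pi$ restricts to a degree-one map off this locus and no multiplicity is introduced in the pushforward.

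It then remains to apply $\pi_*$. Expanding the product and using the projection formula together with $\pi_*\pi^*(\psi^2)=0$, the surviving terms are
$$\PP\HH(2,1^{2g-4}) = 2\,\pi_*\!\bigl(c_1(\omega_\pi)^2\bigr) + 3\,\pi_*\!\bigl(c_1(\omega_\pi)\bigr)\,\psi.$$
Here $\pi_*\!\bigl(c_1(\omega_\pi)\bigr)=2g-2$ produces the coefficient $3(2g-2)=6g-6$ of $\psi$, while $\pi_*\!\bigl(c_1(\omega_\pi)^2\bigr)=\kappa_1=12\lambda$ by Mumford's relation on $\MM_g$; note that working in the interior means there is no boundary correction to $\kappa_1=12\lambda$. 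This recovers the $\psi$-coefficient immediately.

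The main obstacle I anticipate is pinning down the sign of the $\lambda$-term. The naive bookkeeping above gives $+24\lambda$, so matching the stated $-24\lambda$ forces a careful treatment of orientation conventions — in particular the lines-versus-hyperplanes convention for $\OO_{\PP\HH}(1)$, which fixes the sign of $\psi$ relative to $\lambda$, and the corresponding identification of the tautological sub of $q^*\HH$. I would resolve this by fixing every sign from the tautological sequence at the outset and tracking it consistently through the Grothendieck--Riemann--Roch computation expressing $\lambda$ via $c_1(\omega_\pi)^2$, since this is exactly the delicate point where an off-by-a-sign can enter.
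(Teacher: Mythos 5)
Your route is genuinely different from the paper's. The paper determines the two coefficients separately: the $\psi$-coefficient from the test pencil $B$ of canonical divisors on a fixed curve (via Riemann--Hurwitz), and the $\lambda$-coefficient from the relation $\lambda = \kappa_\mu\psi$ on the principal stratum, imported from \cite{ekz} and \cite{chenmoeller}, plus the fact that the complement of $\PP\HH(1^{2g-2})$ is the divisor up to higher codimension. You instead realize the divisor as $\pi_*$ of the zero locus of the relative $1$-jet of the tautological section of $\LL=\omega_\pi\otimes\pi^*\OO_{\PP\HH}(1)$ and compute $c_2(J^1(\LL))$. This is a legitimate and in some ways more self-contained strategy: it does not rely on the external relation $\lambda=\kappa_\mu\psi$, it needs only the genericity check you mention (one double zero at a general point, so the pushforward has degree one), and your $\psi$-coefficient $3\,\pi_*(c_1(\omega_\pi))=6g-6$ agrees with the paper and is confirmed by the paper's own test curve.

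The genuine gap is the $\lambda$-term, and I want to warn you that it cannot be repaired the way you propose. In your setup every sign is already pinned down: $\OO(-1)\hookrightarrow q^*\HH$ is the tautological sub, $\psi=c_1(\OO(1))$, and the section of $\LL$ cuts the canonical divisor; there is no residual freedom, and the computation lands squarely on $2\kappa_1+(6g-6)\psi=24\lambda+(6g-6)\psi$. Moreover, no consistent change of the lines-versus-hyperplanes convention can convert this into $-24\lambda+(6g-6)\psi$: replacing $\psi$ by $-\psi$ flips the $\psi$-coefficient, not the $\lambda$-coefficient, and $\lambda$ is pulled back from $\MM_g$, hence convention-independent. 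Note also that the paper's own normalization $B\cdot\psi=+1$ on the fiber pencil forces the $\psi$-coefficient to be $+(6g-6)$ in \emph{both} formulas, so the two answers really do disagree by $48\lambda$, which is nonzero in $\Pic_{\bbQ}(\PP\HH)$. So ``tracking signs through GRR'' will not close the gap; you must either locate an actual error in the jet computation (I do not see one --- e.g.\ on the $g=3$ pencil of plane quartics with a fixed transversal line, $c_2(J^1(\LL))$ evaluates to $6$, the correct count) or confront the fact that your formula and the stated one differ in substance. The discrepancy is concentrated exactly where the paper's argument is most delicate, namely the sign of $\psi$ in the stratum relation $\lambda=\kappa_\mu\psi$ relative to the normalization $B\cdot\psi=+1$; as submitted, your proof does not establish the Proposition as stated, and the concluding paragraph asserting that careful bookkeeping will produce $-24\lambda$ is not a proof and, I believe, cannot be made into one within your conventions.
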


\begin{proof}
Suppose the class is 
$$\PP\HH(2, 1^{2g-4}) = a\psi + b\lambda. $$  
Take a general curve $C$ of genus $g$ and consider its canonical embedding in $\PP H^0(C, K)\cong\PP^{g-1}$. A pencil $B$ of canonical divisors in $\PP H^0(C, K)$ corresponds to hyperplanes in $\PP^{g-1}$ containing a fixed linear subspace $\Lambda = \PP^{g-3}$. Project $C$ from $\Lambda$ to a line, which induces a 
map $C\to \PP^1$ of degree $2g-2$. Note that the number of simple ramification points of the map is equal to the intersection number $B\cdot  \PP\HH(2,1^{2g-4})$. Moreover, we have 
$$ B\cdot \psi =1, \quad B\cdot \lambda = 0. $$
By the Riemann-Hurwitz formula, we conclude that 
$$ a = 2g-2 + 2 (2g-2) = 6g-6. $$

In order to calculate $b$, we use the relation 
$$ \lambda = \kappa_{\mu}\cdot \psi $$ restricted to a stratum 
$\PP\HH(\mu)$, see \cite[Section 3.4]{ekz} and \cite[Section 4]{chenmoeller}, where 
for a partition $\mu = (m_1, \ldots, m_n)$, $\kappa_{\mu}$ is defined as  
$$\kappa_{\mu} = \frac{1}{12}\sum_{i=1}^{n} \frac{m_i (m_i+2)}{m_i + 1}.$$ 
In particular, we have $$\kappa_{(1^{2g-2})} = \frac{g-1}{4}.$$ 
Since the complement of the principal stratum $\PP\HH(1^{2g-2})$ in $\PP\HH$ consists of the divisorial stratum $\PP\HH(2, 1^{2g-4})$ union strata of higher codimension, 
in $\Pic_{\bbQ}(\PP\HH)$ we conclude that 
$$ \lambda = \frac{g-1}{4} \psi + c \cdot \PP\HH(2,1^{2g-4})$$ 
with $c$ unknown. 

Using the test curve $B$ again, we have 
$$ \frac{g-1}{4} + (6g-6) c = 0, $$
$$ c = - \frac{1}{24}. $$
Therefore, we conclude that 
$$ \PP\HH(2,1^{2g-4}) = - 24 \lambda + (6g-6) \psi. $$
\end{proof}

Next we take the closure $\PP\BHH(2, 1^{2g-4})$ in the projective bundle $\PP\BHH$ over $\BM_{g}$. Still use $\delta_i$ to denote the pullback of the boundary divisor 
$\delta_i$ from $\BM_g$. Then the rational Picard group of $\PP\BHH$ is generated by $\lambda$, $\psi$ and $\delta_0, \ldots, \delta_{[g/2]}$. Now we can calculate the full class of $\PP\BHH(2, 1^{2g-4})$ including the boundary divisors. 

\begin{theorem}
\label{thm:class}
In $\Pic_{\bbQ}(\PP\BHH)$, we have 
$$ \PP\BHH(2,1^{2g-4}) = (6g-6)\psi - 24\lambda + 2\delta_0 + 3 \sum_{i=1}^{[g/2]}\delta_i. $$
\end{theorem}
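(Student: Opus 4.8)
The plan is to leverage Proposition~\ref{prop:classinterior}, which already pins down the non-boundary part of the class. Writing
\[
\PP\BHH(2,1^{2g-4}) = (6g-6)\psi - 24\lambda + b_0\delta_0 + \sum_{i=1}^{[g/2]} b_i\delta_i
\]
in $\Pic_{\bbQ}(\PP\BHH)$, it remains only to determine the boundary coefficients $b_0$ and $b_i$. Since the $\psi$ and $\lambda$ coefficients are known, I must intersect the class with test curves that genuinely meet the boundary; the fiberwise pencil argument of Proposition~\ref{prop:classinterior} sees only $\psi$ (a curve contained in a fiber of $\PP\BHH\to\BM_g$ has zero intersection with the pulled-back $\lambda$ and $\delta_i$), so it must be replaced by one-parameter families whose underlying genus $g$ curve actually degenerates.

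First I would treat $\delta_0$. I would build a family $f\colon S \to B$ over a smooth base curve $B$ whose generic fiber is a smooth genus $g$ curve and which acquires a single non-separating node over finitely many points, together with a section of the relative dualizing sheaf $\omega_{S/B}$; this produces a map $B \to \PP\BHH$ for which $B\cdot\delta_0$, $B\cdot\lambda$, and $B\cdot\psi$ are all computable from the geometry of $S$ (through the number of nodal fibers, $\deg f_*\omega_{S/B}$, and the chosen section). The remaining quantity is $B\cdot\PP\BHH(2,1^{2g-4})$, the number of fibers on which the chosen canonical divisor acquires a double point. I would compute this as a ramification count for the universal canonical divisor $\mathcal{Z}\subset S$, the zero scheme of the section, which maps to $B$ with relative degree $2g-2$: a double zero is exactly a point where two of these $2g-2$ zeros collide, i.e. a ramification point of $\mathcal{Z}\to B$. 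Computing $\deg$ of the ramification divisor by adjunction on the surface $S$ and then correcting for the nodal fibers isolates $b_0$.

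For $\delta_i$ with $1 \le i \le [g/2]$ I would run the analogous argument with a family degenerating to a curve $X\cup Y$ of compact type, with $X$ of genus $i$ and $Y$ of genus $g-i$ meeting at one node; the computation is uniform in $i$, which is why a single value $b_i$ governs all separating boundary divisors. The crucial structural input is that at a separating node the residues of a section of the dualizing sheaf must vanish—the sum of residues on each component is zero and the node is the only pole—so the limiting differential is holomorphic on each of $X$ and $Y$ and therefore vanishes at the node from both branches. Thus a generic point of $\delta_i$ already carries a non-simple zero at the node, in sharp contrast to $\delta_0$, where the residues are generically nonzero and no such zero occurs; this dichotomy is precisely what forces $b_i \neq b_0$.

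The main obstacle is the local analysis at the boundary that converts the ramification count on the smooth fibers into the correct intersection number, i.e. determining the multiplicity with which the closure $\PP\BHH(2,1^{2g-4})$ meets $\delta_0$ and $\delta_i$. Concretely, in the plumbing coordinates $xy=t$ of a smoothing I must track how the colliding pair of simple zeros of the differential limits onto the node (for $\delta_i$) or stays away from it (for $\delta_0$) and read off the order of contact. This degeneration of zeros of abelian differentials near a node—rather than any formal manipulation of Chern classes—is the technical heart of the argument and the step where the distinction between the values $2$ and $3$ is established.
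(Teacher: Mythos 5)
Your reduction to the two unknown boundary coefficients $b_0$ and $b_i$ via Proposition~\ref{prop:classinterior} is valid, and your central geometric observation is exactly the right one: at a separating node the residues of a section of the dualizing sheaf must vanish, so the limiting differential vanishes to order two at the node from both branches, whereas at a non-separating node the residues are generically nonzero and the $2g-2$ simple zeros stay away from the node. The problem is that the proposal stops precisely where the theorem gets proved. You explicitly label the determination of the order of contact of $\PP\BHH(2,1^{2g-4})$ along $\delta_0$ and $\delta_i$ as ``the main obstacle'' and ``the technical heart'' and then do not carry it out, so the values $2$ and $3$ are never derived. That step is not a routine appendix to the outline; it is the entire content of the theorem beyond Proposition~\ref{prop:classinterior}, and a direct test-curve intersection with $\delta_i$ is genuinely delicate because of the excess contribution at the separating nodes that you yourself flag.

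For comparison, the paper sidesteps the need to compute $B\cdot\PP\BHH(2,1^{2g-4})$ directly for a degenerating family. It instead extends the linear relation $\lambda = \frac{g-1}{4}\psi + c\cdot\PP\BHH(2,1^{2g-4})$, with $c=-\frac{1}{24}$ already determined in the proof of Proposition~\ref{prop:classinterior}, across the boundary. Over $\delta_0$ this follows from Noether's formula $12\lambda=\kappa_1+\delta_0$ on $\PP\HH(1^{2g-2})\cup\delta_0$ (using that the simple zeros avoid the node), which gives $b_0=2$. Over $\delta_i$ it takes a family with $2g-2$ sections forming a canonical divisor, blows up the separating nodes where two sections collide so as to insert a $(-2)$-curve $E_i$, and uses the identity $\omega=\pi^{*}\psi+\sum E_i+\sum S_i$ to compute
$$\pi_{*}(\omega^2)=(3g-3)\psi+\tfrac{1}{2}\sum_{i\geq 1}\delta_i,$$
whence Noether's formula yields $\lambda=\frac{g-1}{4}\psi+\frac{1}{12}\delta_0+\frac{1}{8}\sum_{i\geq 1}\delta_i-\frac{1}{24}\PP\BHH(2,1^{2g-4})$ and hence $b_i=3$. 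To complete your argument you would need either to reproduce a self-intersection computation of this kind, or to exhibit explicit test families together with a rigorous count of $B\cdot\PP\BHH(2,1^{2g-4})$ that resolves the contact multiplicity along $\delta_i$; neither is present in the proposal.
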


\begin{proof}
In the proof of Proposition~\ref{prop:classinterior}, the relation 
$ \lambda = \kappa_{\mu} \cdot \psi $
restricted to a stratum $\PP\HH(\mu)$ arises from   
Noether's formula $12 \lambda = \kappa_1 + \delta$ modulo boundary, where 
$\kappa_1$ is the Mumford class and $\delta$ is the total boundary class. 
Note that a general degeneration from the principal stratum $\PP\HH(1^{2g-2})$ to the boundary $\delta_0$ keep the $2g-2$ sections of simple zeros away from the non-separating node. Then we can extend Noether's formula as 
$12 \lambda = \kappa_1 + \delta_0$ restricted to $\PP\HH(1^{2g-2})$ union $\delta_0$. By Proposition~\ref{prop:classinterior} we can rewrite this relation as 
$$ \PP\BHH(2,1^{2g-4}) = (6g-6) \psi - 24\lambda + 2\delta_{0} $$
in $\Pic_{\bbQ}(\PP\BHH)$ modulo $\delta_i$ for $i > 0$. 

Next, take a general one-dimensional family $B$ of genus $g$ curves with $2g-2$ sections such that in a generic fiber the sum of the sections yields a canonical divisor. 
Moreover, suppose there are $k$ special fibers $C$ that consist of two components $C_1$ and $C_2$ joined at a separating node $t$. Then $t$ has to be a zero of the canonical divisor of $C$ restricted to $C_1$ and $C_2$. In other words, two of the $2g-2$ sections meet at $t$. Without loss of generality we can assume that the first two sections meet at $t$, the next $2i-2$ sections meet $C_1$ and the last $2(g-i)-2$ sections meet $C_2$, where $i$ is the genus of $C_1$ for $1\leq i \leq [g/2]$. 

Blow up the family at these $k$ nodes, i.e. we insert a rational bridge 
$E$ between $C_1$ and $C_2$ with $E^2 = -2$. Let $\pi: \CC\to B$ denote the resulting family, $S_1, \ldots, S_{2g-2}$ the proper transforms of the sections, and 
$E_1, \ldots, E_k$ the $k$ exceptional curves. As an analogue of the exact sequence in \cite[Proof of Proposition 4.8]{chenmoeller}, we have 
$$ 0 \to \pi^{*}\OO(1) \to \Omega_{\pi}\otimes \OO_{\CC}\Big(-\sum_{i=1}^k E_i\Big)\to \sum_{i = 1}^{2g-2} \OO_{S_i}(S_i) \to 0. $$
Here $\OO(1)$ is the universal line bundle whose first Chern class is $\psi$ and $\Omega_{\pi}$ is the relative dualizing sheaf associated to $\pi$. 
The middle term restricted to $C_i$ is the canonical line bundle of $C_i$ and it is trivial restricted to $E_i$.  
Let $\omega = c_1(\Omega_\pi)$. 
Then we conclude that 
$$ \omega = \pi^{*}\psi + \sum_{i=1}^k E_i + \sum_{i=1}^{2g-2} S_i. $$
Moreover, we have 
$$ E_i\cdot S_1 = E_i\cdot S_2 = 1, $$
$$ E_i \cdot S_j = 0, \ j > 2, $$
$$ E_i^2 = -2, \ E_i \cdot E_j = 0, \ i\neq j. $$
Then for $i\neq 1, 2$ we see that 
$$ \pi_{*}(S_i^2) = - \pi_{*}(\omega\cdot S_i) = - \psi -  \pi_{*}(S_i^2),$$  
$$ \pi_{*}(S_i^2) =  -\frac{1}{2}\psi. $$
Similarly we have 
$$ \pi_{*}(S_1^2) = \pi_{*}(S_2^2) = -\frac{1}{2}\psi - \frac{k}{2}. $$
Therefore, we obtain that 
\begin{eqnarray*}
 \pi_{*}(\omega^2) & = & 2 (2g-2) \psi - 2k + 4k - (g-1)\psi - k \\
                                                     & = & (3g-3)\psi + k. 
\end{eqnarray*} 
Since $E_i^2 = -2$ and the family $B$ intersects $\delta_i$ with multiplicity two, i.e. $B\cdot \delta_{i} = 2k$, 
the above calculation implies that  
$$ \pi_{*}(\omega^2) = (3g-3)\psi + \frac{1}{2}\sum_{i=1}^{[g/2]} \delta_{i}. $$

By Noether's formula, restricted to $B$ we have 
\begin{eqnarray*}
\lambda & = &\frac{\pi_{*}(\omega^2) + \delta}{12} \\
                & = & \frac{1}{12}\Big((3g-3)\psi + \frac{1}{2}\sum_{i=1}^{[g/2]} \delta_{i} + \sum_{i=0}^{[g/2]}\delta_i\Big)\\
                & = & \frac{g-1}{4}\psi + \frac{1}{12}\delta_0 + \frac{1}{8}\sum_{i=1}^{[g/2]}\delta_i.                                                                                                     
\end{eqnarray*}
In other words, in $\Pic_{\bbQ}(\PP\BHH)$ we have 
$$ \lambda = \frac{g-1}{4}\psi + \frac{1}{12}\delta_0 + \frac{1}{8}\sum_{i=1}^{[g/2]}\delta_i + c \cdot \PP\BHH(2,1^{2g-4}). $$
But we have seen that $c = -1/24$. Therefore, we thus conclude the desired divisor class. 
\end{proof}

\begin{remark} 
The divisor class of $\PP\HH(2, 1^{2g-4})$ was first calculated by Korotkin and Zograf \cite[Theorem 2]{kztau} using the Tau function. After finishing the paper the author learnt from Zograf that another explanation of the divisor class was recently discovered by Zvonkine \cite{zvonkine}. Comparing Theorem~\ref{thm:class} with \cite{kztau}, we see that the corresponding coefficients are opposite to each other. This sign issue is exactly due to different conventions of projectivization parameterizing lines or hyperplanes. Consequently in our setting the classes $\lambda$, $\psi$ and etc are the opposites to those in \cite{kztau}. 
\end{remark}

\begin{example}
Consider the case $g=3$. Take a general pencil $B$ of plane cubics. Let $L$ be a general line in $\PP^2$. The section of every quartic in $B$ with $L$ defines
a canonical divisor. We have $ B\cdot \lambda = 3$, $B\cdot \delta_0 = 27$ and $B\cdot \delta_i = 0$ for $i > 0$, 
see \cite[Chapter 3.F]{harrismorrison}. The universal canonical divisor has class $(1,4)$ in $B\times L \cong \PP^1\times \PP^1$. Projecting it to $B$ induces 
a degree $4$ covering map. If along $L$ a simple ramification occurs, it gives rise to a canonical divisor with a zero of multiplicity two. 
By the Riemann-Hurwitz formula, the number of ramifications is equal to $6$, hence we obtain that $B\cdot \PP\BHH(2,1,1) = 6$. 
The Hodge bundle restricted to $B$ is isomorphic to $\OO(1)^{\oplus 3}$. Hence its projectivization is trivial, but the universal line bundle corresponds to 
$\OO(2)$ due to the twist, see \cite[Appendix A]{lazarsfeldI}. Then we conclude that $B\cdot \psi = 2$. One checks that these intersection numbers satisfy the relation in Theorem~\ref{thm:class}. 
\end{example}

Recall that a divisor class is big if it lies in the interior of the cone of pseudo-effective divisors, see \cite[Chapter 2.2]{lazarsfeldI}. 

\begin{proposition}
\label{prop:rigid}
The divisor class $\PP\BHH(2, 1^{2g-4})$ lies on the boundary of the pseudo-effective cone of $\PP\BHH$. 
\end{proposition}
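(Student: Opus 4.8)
The plan is to show that $D := \PP\BHH(2,1^{2g-4})$, which is effective and hence pseudo-effective, fails to be big; since a class is big exactly when it lies in the interior of the pseudo-effective cone \cite{lazarsfeldI}, this places $D$ on the boundary. First I would invoke the duality between the pseudo-effective cone of divisors and the closed cone $\overline{\mathrm{Mov}}_1$ of movable curves (Boucksom--Demailly--P\u aun--Peternell): a pseudo-effective class is big if and only if it has strictly positive intersection with every nonzero movable curve class. Thus it suffices to exhibit one nonzero movable class $\gamma$ with $D\cdot\gamma\le 0$; because $D$ is pseudo-effective this forces $D\cdot\gamma=0$.

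The class $\gamma$ I would use is that of a covering family of curves lying in the principal stratum, the idea being that $D$ is numerically trivial on such curves. Let $B$ be any family of canonical divisors whose $2g-2$ zeros remain simple and distinct, so that $B$ lies in $\PP\HH(1^{2g-2})$ and its closure avoids the double-zero locus $D$; then $D\cdot B=0$ on the nose. As a consistency check, for a family degenerating as in the proof of Theorem~\ref{thm:class} the Noether-type relation established there,
\[
\lambda = \frac{g-1}{4}\psi + \frac{1}{12}\delta_0 + \frac{1}{8}\sum_{i=1}^{[g/2]}\delta_i,
\]
holds on $B$, and substituting it into the class of Theorem~\ref{thm:class} gives
\[
D\cdot B = (6g-6)\,\psi\cdot B - 24\,\lambda\cdot B + 2\,\delta_0\cdot B + 3\sum_{i=1}^{[g/2]}\delta_i\cdot B = 0,
\]
the two groups of terms cancelling identically. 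This is precisely the statement that $D$ records the failure of the principal-stratum relation $\lambda=\frac{g-1}{4}\psi$, which vanishes there.

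The main obstacle is to arrange that such curves sweep out a dense subset of $\PP\BHH$, so that $\gamma$ is genuinely movable and nonzero. Through a general point $(C,\omega)$ of $\PP\BHH$, with $\omega$ having simple zeros, I must produce a complete curve inside the closure of the principal stratum. A fibral curve will not do: a pencil in a single fibre $\PP^{g-1}$ meets $D$ in $6g-6$ points and so leaves the principal stratum, so the curve must genuinely vary the underlying $C$. Since the open stratum need not contain complete curves, one is forced to allow boundary degenerations (for instance the separating-node collisions of Theorem~\ref{thm:class}) while keeping all zeros simple and the limit off $D$. A clean way to package this is to take for $\gamma$ a limit in $\overline{\mathrm{Mov}}_1$ of classes of genuinely covering families concentrated near the principal stratum, along which the ratio $D\cdot B/\psi\cdot B$ tends to $0$; controlling the boundary contributions so that this limit is nonzero and satisfies $D\cdot\gamma=0$ is the crux. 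Granting the covering step, $\gamma$ is a nonzero movable class with $D\cdot\gamma=0$, so $D$ is not big and therefore lies on the boundary of the pseudo-effective cone.
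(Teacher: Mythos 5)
Your overall strategy --- show that $D=\PP\BHH(2,1^{2g-4})$ is not big by exhibiting curves disjoint from it that nonetheless ``fill up'' the space --- is the same idea as the paper's, but the step you yourself flag as the crux is exactly the step you have not supplied, and it is the whole content of the proof. You need either (in your BDPP formulation) a nonzero movable curve class $\gamma$ with $D\cdot\gamma=0$, or (as the paper does) a collection of irreducible curves, each disjoint from $D$, whose union is Zariski dense in $\PP\BHH$. Your candidate families --- complete one-parameter families of canonical divisors with all zeros staying simple --- are not shown to exist through a general point, and as you note the obvious fibral pencils fail since they meet $D$ in $6g-6$ points. The paper resolves this with an input from Teichm\"uller dynamics that your proposal never invokes: Teichm\"uller curves generated by abelian differentials in the principal stratum $\PP\HH(1^{2g-2})$ are disjoint from $\PP\BHH(2,1^{2g-4})$ even after compactification (\cite[Proposition 3.1]{chenmoeller}), and their union is Zariski dense in $\PP\HH$ (\cite[Theorem 1.21]{chencovers}). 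Without some such supply of curves, your argument does not get off the ground.

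There is also a structural mismatch worth noting. Teichm\"uller curves are rigid and form only a countable, Zariski-dense collection --- they are not an algebraic covering family --- so it is not clear they produce a movable class in the sense required by the Boucksom--Demailly--P\u{a}un--Peternell duality you want to apply. The paper's argument is deliberately weaker in its requirements: it writes a putative big class as $N+A$ with $N$ effective and $A$ ample, observes that each Teichm\"uller curve $\TT$ in the principal stratum satisfies $\TT\cdot(N+A)=0$ and $\TT\cdot A>0$, hence $\TT\cdot N<0$ and $\TT\subset N$; a divisor cannot contain a Zariski-dense union of curves, contradiction. This only needs density of the disjoint curves, not movability of their class. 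If you want to keep the BDPP framing you would have to justify that some limit of these (or of your degenerating families) is a genuine nonzero movable class, which is an additional and nontrivial claim; switching to the $N+A$ argument removes that burden. Your consistency check that $D\cdot B=0$ for the degenerating families of Theorem~\ref{thm:class} is correct but does not substitute for either density or movability.
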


\begin{proof}
If $\PP\BHH(2, 1^{2g-4})$ is big, we can write it as $N + A$, where $N$ is effective and $A$ is ample. Consider Teichm\"uller curves $\TT$ in $\PP\HH(1^{2g-2})$. 
By \cite[Proposition 3.1]{chenmoeller} we know $\TT$ is disjoint with $\PP\BHH(2, 1^{2g-4})$, hence we have 
$$ \TT \cdot (N+A) = 0. $$
Since $A$ is ample, $\TT \cdot A > 0$. Therefore, $\TT\cdot N < 0$ and consequently $N$ contains $\TT$. However, the union of such $\TT$ is Zariski dense in 
$\PP\HH$, see e.g. \cite[Theorem 1.21]{chencovers}. Then we conclude a contradiction. 
\end{proof}

\section{Extremal effective divisors on $\BM_{g,n}$}
\label{sec:extremal}

We say that an effective divisor class $D$ in a projective variety $X$ is extremal, if for any linear combination $D = D_1 + D_2$ with $D_i$ 
pseudo-effective, $D$ and $D_i$ are proportional. In this case, we also say that $D$ spans an extremal ray of the pseudo-effective cone $\BEff(X)$. Let us first present 
a method to test the extremality of an effective divisor. 

\begin{lemma}
\label{lem:negative}
Suppose that $D$ is an irreducible effective divisor and $A$ an ample divisor in $X$. Let $S$ be a set of irreducible effective curves in $D$ such that the union of these curves is Zariski dense in $D$. If for every curve $C$ in $S$ we have 
$$\frac{C\cdot D}{C\cdot A} \leq - d $$
for fixed $d > 0$, then $D$ is an extremal divisor.   
\end{lemma}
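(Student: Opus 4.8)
The aim is to show that if $D \equiv D_1 + D_2$ with both $D_i$ pseudo-effective, then each $D_i$ is a nonnegative multiple of $D$; this is exactly the statement that $D$ spans an extremal ray of $\BEff(X)$. The whole argument rests on one elementary observation. If $E \geq 0$ is an effective divisor with $D \not\subset \operatorname{Supp}(E)$, then $\operatorname{Supp}(E) \cap D$ is a proper closed subset of the irreducible variety $D$, so by the Zariski density of $\bigcup_{C \in S} C$ in $D$ there is a curve $C \in S$ with $C \not\subset \operatorname{Supp}(E)$, and for such a $C$ we have $C \cdot E \geq 0$. I would first use this to record the rigidity of $D$: given $E \geq 0$ with $E \equiv D$, write $E = m D + F$ with $F \geq 0$, $D \not\subset \operatorname{Supp}(F)$, and $m = \operatorname{mult}_D(E) \geq 0$; choosing $C \in S$ off $\operatorname{Supp}(F)$ gives $C \cdot F \geq 0$, and since $F \equiv (1-m)D$ this says $(1-m)(C \cdot D) \geq 0$, whence $m \geq 1$ because $C \cdot D < 0$. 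Thus every effective representative of $[D]$ contains $D$ — the negativity that the summands must inherit.

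The plan is then to approximate the pseudo-effective summands by effective divisors and transfer this bookkeeping. Since each $D_i$ is pseudo-effective and $A$ is ample, $D_i + \epsilon A$ is big for every $\epsilon > 0$, hence numerically equivalent to an effective $\bbQ$-divisor $E_i^{\epsilon}$; summing gives $D + 2\epsilon A \equiv E_1^{\epsilon} + E_2^{\epsilon}$. I would decompose $E_i^{\epsilon} = m_i^{\epsilon} D + F_i^{\epsilon}$ with $F_i^{\epsilon} \geq 0$ not containing $D$ and $m_i^{\epsilon} = \operatorname{mult}_D(E_i^{\epsilon}) \geq 0$, and choose a single $C \in S$ avoiding $\operatorname{Supp}(F_1^{\epsilon}) \cup \operatorname{Supp}(F_2^{\epsilon})$, so that $C \cdot F_i^{\epsilon} \geq 0$. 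Intersecting the relation with $C$ gives $C \cdot D + 2\epsilon (C \cdot A) \geq (m_1^{\epsilon} + m_2^{\epsilon})(C \cdot D)$, that is $(1 - m_1^{\epsilon} - m_2^{\epsilon})(C \cdot D) \geq -2\epsilon (C \cdot A)$.

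Dividing by $C \cdot D < 0$ and invoking the uniform hypothesis $\tfrac{C \cdot D}{C \cdot A} \leq -d$ (equivalently $-1/d \leq \tfrac{C\cdot A}{C\cdot D} < 0$) yields $1 - m_1^{\epsilon} - m_2^{\epsilon} \leq 2\epsilon/d$. This is where the fixed constant $d$, as opposed to mere negativity $C \cdot D < 0$, is indispensable: the witnessing curve $C$ may change with $\epsilon$ and the curves in $S$ need not be numerically equivalent, so without a uniform lower bound on $\tfrac{C\cdot D}{C\cdot A}$ the right-hand side could fail to tend to $0$. The $m_i^{\epsilon}$ are bounded as $\epsilon \to 0$, so passing to a subsequence with $m_i^{\epsilon} \to a_i \geq 0$ and letting $\epsilon \to 0$ forces $a_1 + a_2 \geq 1$. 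By closedness of $\BEff(X)$, each limit relation $D_i + \epsilon A - m_i^{\epsilon} D \equiv F_i^{\epsilon} \geq 0$ shows that $D_i - a_i D$ is pseudo-effective (the asymptotic multiplicity $a_i$ is the divisorial Zariski coefficient $\sigma_D(D_i)$).

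To finish, write $D_i = a_i D + P_i$ with $P_i = D_i - a_i D$ pseudo-effective. Then $(1 - a_1 - a_2) D = P_1 + P_2$. If $a_1 + a_2 > 1$, the left side is a negative multiple of the pseudo-effective class $D$ while the right side is pseudo-effective, forcing both to vanish (the pseudo-effective cone of a projective variety is salient, hence contains no nonzero class together with its negative), a contradiction. Combined with $a_1 + a_2 \geq 1$ this gives $a_1 + a_2 = 1$ and $P_1 + P_2 \equiv 0$, so again by salience $P_1 = P_2 = 0$ and $D_i = a_i D$, as desired. The main obstacle, and the crux of the whole argument, is precisely this passage from effective approximations to the pseudo-effective summands: one must keep the residual parts $F_i^{\epsilon}$ numerically under control uniformly as $\epsilon \to 0$ even though the curve certifying their nonnegativity varies with $\epsilon$, and it is the uniform ratio bound $d$ that legitimizes the limit.
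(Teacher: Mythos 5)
Your argument is correct and rests on the same underlying mechanism as the paper's proof --- perturb a pseudo-effective summand by an ample class to make it effective, use the dense family $S$ of $D$-negative curves to force any effective representative to contain $D$ with a definite multiplicity, and pass to the limit inside the closed cone $\BEff(X)$, with the uniform constant $d$ controlling the error introduced by the ample perturbation --- but the two proofs package this quite differently. The paper first normalizes: it assumes $D_1$ lies on the boundary of $\BEff(X)$ with $D_1 - sD$ not pseudo-effective for any $s>0$, uses a pigeonhole step to find a dense subfamily $S_1\subseteq S$ with $C\cdot D_1\le\tfrac12\,C\cdot D$, scales up to $F_n=nD_1+A$, peels off $D$ with multiplicity at least $\tfrac{n}{2}-\tfrac{1}{d}$, and derives a contradiction from the pseudo-effectivity of the limit $D_1-\tfrac12 D$. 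You instead treat the two summands symmetrically, work with $D_i+\epsilon A$ as $\epsilon\to 0$, and extract the asymptotic multiplicities $a_i=\sigma_D(D_i)$ directly, concluding $a_1+a_2=1$ and $D_i=a_iD$ from the salience of $\BEff(X)$; this avoids both the normalization and the pigeonhole step and produces the proportionality constants explicitly rather than by contradiction. The one point you assert without justification is that the multiplicities $m_i^{\epsilon}$ stay bounded as $\epsilon\to 0$, which is needed to extract a convergent subsequence; it is true (intersect $E_i^{\epsilon}=m_i^{\epsilon}D+F_i^{\epsilon}$ with a general complete-intersection class $H^{\dim X-1}$ for $H$ ample and use $D\cdot H^{\dim X-1}>0$ together with $F_i^{\epsilon}\cdot H^{\dim X-1}\ge 0$), but the line should be added. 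With that, your proof is complete and, if anything, slightly cleaner than the published one.
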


\begin{proof}
Suppose that $D = D_1 +  D_2$ with $D_i$ pseudo-effective. If $D_i$ and $D$ are not proportional, we can assume that 
$D_i$ lies in the boundary of $\BEff(X)$ and moreover that $D_i - s D$ is not pseudo-effective for any $s > 0$, because otherwise we can replace $D_1$ and $D_2$ by the intersections of the linear span $\langle D_1, D_2 \rangle$ with the boundary of $\BEff(X)$, possibly after rescaling. 

By assumption, we have $C\cdot (D_1 + D_2) = C\cdot D < 0$. Therefore, without loss of generality we may assume that $S$ has a subset $S_1$ whose elements $C$ satisfy  
$$C\cdot D_1 \leq \frac{1}{2} \cdot (C\cdot D)$$  
and the union of $C$ in $S_1$ forms a dense subset of $D$ as well. 

Consider the divisor class $F_n = n D_1 + A$ for $n$ sufficiently large. Since $D_1$ is pseudo-effective and $A$ is ample, $F_n$ can be represented by an effective divisor. 
It is easy to check that for $k < \frac{n}{2} -\frac{1}{d}$, we have $C\cdot (F_n - kD) < 0$ for every $C$ in $S_1$. Since such curves $C$ form a dense subset in $D$, it implies that the multiplicity of $D$ in the base locus of $F_n$ is at least equal to $\frac{n}{2} -\frac{1}{d}$. Consequently the class 
$$E_n = F_n - \Big(\frac{n}{2} -\frac{1}{d}\Big) D$$ 
is pseudo-effective. As $n$ goes to infinity, the limit of the divisor classes $\{\frac{1}{n} E_n\}$ is equal to $D_1 - \frac{1}{2} D$, which is also pseudo-effective. But this contradicts our assumption that $D_1 - s D$ is not pseudo-effective for any $s > 0$. 
\end{proof}

In what follows we will apply Lemma~\ref{lem:negative} to Teichm\"uller curves contained in a stratum of abelian differentials. 

Consider the moduli space $\BM_{g,n}$ of stable genus $g$ curves with $n$ ordered marked points. Since Teichm\"uller curves form a Zariski dense subset 
in any (connected component of) stratum $\HH(\mu)$, if $\HH(\mu)$ dominates an irreducible effective divisor $D$ in $\BM_{g,n}$, then the images of 
these Teichm\"uller curves also form a Zariski dense subset in $D$. In order to apply Lemma~\ref{lem:negative} to show the extremality of $D$, we need to understand the intersection of a Teichm\"uller curve with divisor classes on $\BM_{g,n}$. Luckily this has been worked out in \cite[Section 4]{chenmoeller}. For the reader's convenience, in what follows we recall the related results.

Let $C$ be (the closure of) a Teichm\"uller curve in the stratum $\HH(\mu)$, where $\mu = (m_1, \ldots, m_n)$ is a partition of $2g-2$. Let $L$ be the sum of Lyapunov exponents of $C$ and $\chi$ its orbifold Euler characteristic. Lift $C$ to $\BM_{g,n}$ by marking the $n$ zeros of its generating abelian differential. Let $\omega_i$
be the first Chern class of the relative dualizing sheaf associated to forgetting the $i$th marked point. Recall that 
$$\kappa_{\mu} = \frac{1}{12}\sum_{i=1}^n \frac{m_i (m_i + 2)}{m_i+1}. $$
By \cite[Proposition 4.8]{chenmoeller} we have 
$$ C\cdot \lambda = \frac{\chi}{2}\cdot L, $$
$$ C\cdot \delta_0 = \frac{\chi}{2}\cdot (12L - 12\kappa_{\mu}), $$
$$ C\cdot \omega_i = \frac{\chi}{2} \cdot \frac{1}{m_i + 1}. $$

Moreover, we use $\delta_{\other}$ to denote an arbitrary linear combination of boundary divisors of $\BM_{g,n}$ that does not contain $\delta_{0}$. The purpose of doing this is because Teichm\"uller curves generated by abelian differentials do not intersect any boundary divisors except $\delta_0$ \cite[Corollary 3.2]{chenmoeller}. Therefore, we can write a divisor class in $\Pic_{\mathbb Q}(\BM_{g,n})$ as 
$$ D = a\lambda + \sum_{i=1}^n b_i \omega_i + c \delta_0 + \delta_{\other}. $$
By the above intersection numbers, we have 
$$ \frac{C\cdot D}{C\cdot \lambda} = a + \sum_{i=1}^n \frac{b_i}{(m_i+1)L} + c \Big(12 - \frac{12\kappa_\mu}{L}\Big). $$

Let $\underline{a} = (a_1, \ldots, a_n)$ be a sequence of positive integers such that $\sum_{i=1}^n a_i = g$. Consider the pointed Brill-Noether divisor 
$BN^1_{g, \underline{a}}$ in $\BM_{g,n}$ parameterizing $(X, p_1,\ldots, p_n)$ such that 
$h^0(X, \sum_{i=1}^n a_i p_i)\geq 2$. Its divisor class was first calculated in \cite{Logan} as 
$$ BN^1_{g, \underline{a}} = -\lambda + \sum_{i=1}^n \frac{a_i (a_i+1)}{2}\omega_i - \delta_{\other}. $$
We have a dominant map $\HH(\underline{a}, 1^{g-2})\to BN^1_{g,\underline{a}}$ by marking the first $n$ zeros of an abelian differential. 

In order to apply Lemma~\ref{lem:negative}, we need an ample divisor on $\BM_{g,n}$. The class $\lambda$ is semi-ample, so certain perturbation 
$$D_{\underline{s}} = \lambda + s_0\delta_0 + \sum_{i=1}^g s_i\omega_i + \delta_{\other}$$ 
is ample, where $\underline{s} = (s_0, \ldots, s_g)$ with $|s_i|$ as small as we want. 

\begin{lemma}
\label{lem:brill-noether}
Let $C$ be a Teichm\"uller curve in $\HH(\underline{a}, 1^{g-2})$ mapping to $BN^1_{g,\underline{a}}$. Then 
$C\cdot BN^1_{g,\underline{a}} < 0$ if and only if $L > \frac{g}{2}$. Moreover, suppose that for every $C$ we have 
$L \geq \frac{g}{2} + \epsilon$ for a given $\epsilon > 0$, then there exists an ample divisor $D_{\underline{s}}$ as above such that 
$$ \frac{C\cdot  BN^1_{g,\underline{a}} }{C\cdot D_{\underline{s}}} \leq - d $$
for some $d > 0$, where $d$ only depends on $\epsilon$, $g$ and $\underline{a}$. 
\end{lemma}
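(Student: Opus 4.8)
The plan is to reduce both assertions to the single ratio $\frac{C\cdot BN^1_{g,\underline a}}{C\cdot\lambda}$, which can be read off directly from the intersection formulas recalled above. First I would substitute the coefficients of
$$ BN^1_{g,\underline a} = -\lambda + \sum_{i=1}^n \frac{a_i(a_i+1)}{2}\omega_i - \delta_{\other} $$
into the general expression $\frac{C\cdot D}{C\cdot\lambda} = a + \sum_i \frac{b_i}{(m_i+1)L} + c\big(12 - \frac{12\kappa_\mu}{L}\big)$, using that along $C$ the $n$ marked zeros have multiplicities $m_i = a_i$ and that $C$ meets no boundary divisor other than $\delta_0$, so the $\delta_{\other}$ term contributes nothing. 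Since each summand simplifies to $\frac{a_i(a_i+1)/2}{(a_i+1)L} = \frac{a_i}{2L}$ and $\sum_{i=1}^n a_i = g$, this collapses to
$$ \frac{C\cdot BN^1_{g,\underline a}}{C\cdot\lambda} = -1 + \sum_{i=1}^n \frac{a_i}{2L} = \frac{g-2L}{2L}. $$
Because $C\cdot\lambda = \frac{\chi}{2}L > 0$ (the Hodge bundle has positive degree on a Teichm\"uller curve), the quantity $C\cdot BN^1_{g,\underline a}$ has the same sign as $g-2L$, which yields the first claim: it is negative exactly when $L > \frac{g}{2}$.

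For the quantitative part, assume $L \geq \frac{g}{2}+\epsilon$. Then $g-2L \leq -2\epsilon$ and $2L \geq g+2\epsilon$, and since $L \mapsto \frac{g-2L}{2L} = \frac{g}{2L}-1$ is decreasing, I obtain the uniform bound
$$ \frac{C\cdot BN^1_{g,\underline a}}{C\cdot\lambda} \leq \frac{-2\epsilon}{g+2\epsilon} < 0 $$
valid for every such $C$. Next I would fix one choice of $\underline s$ with $|s_i|$ small enough that $D_{\underline s} = \lambda + s_0\delta_0 + \sum_i s_i\omega_i + \delta_{\other}$ is ample, which exists because $\lambda$ is semi-ample. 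Writing $Q := \frac{C\cdot D_{\underline s}}{C\cdot\lambda} = 1 + \sum_i \frac{s_i}{(m_i+1)L} + s_0\big(12 - \frac{12\kappa_\mu}{L}\big)$, ampleness forces $C\cdot D_{\underline s} > 0$, hence $Q > 0$; and since $L \geq \frac{g}{2}+\epsilon$ bounds every $1/L$-term, $Q$ is bounded above by a constant $Q_{\max}$ depending only on $\underline s,\epsilon,g$ and $\kappa_\mu$, equivalently only on $\epsilon, g, \underline a$.

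Finally I would combine the two estimates. Since
$$ \frac{C\cdot BN^1_{g,\underline a}}{C\cdot D_{\underline s}} = \frac{(g-2L)/(2L)}{Q} $$
has numerator $\leq \frac{-2\epsilon}{g+2\epsilon} < 0$ and denominator $0 < Q \leq Q_{\max}$, dividing a uniformly negative numerator by a bounded positive denominator gives $\frac{C\cdot BN^1_{g,\underline a}}{C\cdot D_{\underline s}} \leq -d$ with $d = \frac{2\epsilon}{(g+2\epsilon)Q_{\max}} > 0$, and this $d$ depends only on $\epsilon, g, \underline a$, as required. The only delicate point I expect is \emph{uniformity}: the curves $C$ range over a Zariski-dense family with $L$ varying, so the negative bound on the numerator and the finite upper bound $Q_{\max}$ on the denominator must hold simultaneously for all of them. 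The hypothesis $L \geq \frac{g}{2}+\epsilon$ is precisely what keeps the $1/L$ contributions from degenerating, while ampleness of $D_{\underline s}$ keeps the denominator positive; together they make the constant $d$ uniform.
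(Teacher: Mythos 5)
Your proposal is correct and follows essentially the same route as the paper: compute $\frac{C\cdot BN^1_{g,\underline a}}{C\cdot\lambda} = -1 + \sum_i \frac{a_i}{2L}$ from the intersection formulas, which gives the sign statement immediately, and then bound the ratio against the ample perturbation $D_{\underline s}$ uniformly in $C$ using $L \geq \frac{g}{2}+\epsilon$. The only cosmetic difference is that the paper controls the $\delta_0$-term of $\frac{C\cdot D_{\underline s}}{C\cdot\lambda}$ via the slope bound $s(C)\leq 8+\frac{4}{g}$ (equivalently $L \leq \frac{3g}{g-1}\kappa_\mu$) to write an explicit constant, whereas you simply observe that $12 - \frac{12\kappa_\mu}{L}$ and the $1/L$-terms are bounded once $L$ is bounded below; both yield a $d$ depending only on $\epsilon$, $g$, $\underline a$.
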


\begin{proof}
We have 
$$ \frac{C\cdot BN^1_{g,\underline{a}}}{C\cdot \lambda} = -1 + \sum_{i=1}^n \frac{a_i}{2L}. $$
The first part of the lemma follows from the assumption that $\sum_{i=1}^n a_i = g$.

For the other part, we have 
$$ \frac{C\cdot D_{\underline{s}}}{C\cdot \lambda} = 1 + s_0 \Big(12 - \frac{12\kappa_{\mu}}{L}\Big) + \sum_{i=1}^n \frac{s_i}{(a_i+1)L},$$
where $\mu = (a_1,\ldots, a_n, 1^{g-2})$. Moreover, we know   
$$L \leq \frac{3g}{g-1} \kappa_{\mu}, $$
which is a direct consequence of the fact that the slope of $C$ satisfies 
$$s(C) = \frac{C\cdot \delta}{C\cdot \lambda} \leq 8 + \frac{4}{g}, $$
see \cite[Chapter 6.D]{harrismorrison}. Then we conclude that 
$$ \frac{C\cdot BN^1_{g,\underline{a}}}{C\cdot D_{\underline{s}}} \leq \frac{-2\epsilon}{(g+2\epsilon) \Big( 1 + |s| \big(8 + \frac{4}{g} + \frac{2}{g}\sum_{i=1}^n\frac{1}{a_i+1}\big)  \Big) }, $$
where $s = \max\{ |s_0|, \ldots, |s_n| \}$. 
As mentioned above, we can take certain $\underline{s}$ with $|s_i|$ arbitrarily small while making $D_{\underline{s}}$ ample. This implies the existence 
of the desired bound $-d$, which is independent of $C$. 
\end{proof}

For $n=1$, $BN^1_{g,(g)}$ parameterizes a genus $g$ curve with a marked Weierstrass point, hence we also use $W$ to denote the divisor in this case. 
 
\begin{theorem}
\label{thm:n=1}
For $2\leq g \leq 4$ the divisor $W$ is extremal in $\BM_{g,1}$. 
\end{theorem}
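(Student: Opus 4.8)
The plan is to deduce the extremality of $W = BN^1_{g,(g)}$ from our two auxiliary lemmas. First I would set up the input for Lemma~\ref{lem:negative} with $D = W$: the divisor $W$ is irreducible (its interior is the image of the irreducible universal Weierstrass locus over $\MM_g$), and the stratum $\HH(g, 1^{g-2})$ dominates it. Indeed, marking the zero of order $g$ of a differential in this stratum yields a pair $(C, p_1)$ with $h^0(C, g p_1) \ge 2$: a differential vanishing to order $g$ at $p_1$ lies in $H^0(C, K(-g p_1))$, so by Riemann-Roch $p_1$ is a Weierstrass point. Since Teichm\"uller curves are Zariski dense in each connected component of $\HH(g, 1^{g-2})$, their images form a Zariski dense family $S$ of irreducible curves in $W$, as recorded before Lemma~\ref{lem:brill-noether}. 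Thus it only remains to produce a uniform bound $\frac{C \cdot W}{C \cdot A} \le -d$ with $A$ ample.

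The heart of the matter is the hypothesis of Lemma~\ref{lem:brill-noether}: I must show every Teichm\"uller curve $C$ in $\HH(g, 1^{g-2})$ satisfies $L \ge g/2 + \epsilon$ for a fixed $\epsilon > 0$, where $L$ is its sum of Lyapunov exponents. This is exactly where the restriction $2 \le g \le 4$ is forced. For each of $g = 2, 3, 4$ the stratum $\HH(g, 1^{g-2})$ is connected and lies in the non-varying range of \cite{chenmoeller}, so $L$ is a single constant on the whole stratum; I would then verify $L > g/2$ case by case. For $g = 2$ this is $\HH(2)$ with $L = 4/3 > 1$. For $g = 3, 4$ one reads the non-varying value of $\HH(3,1)$ and $\HH(4,1,1)$ off the tables of \cite{chenmoeller} and checks that it exceeds $3/2$ and $2$ respectively; as a sanity check these values sit below the a priori bound $L \le \frac{3g}{g-1}\kappa_{\mu}$ with $\mu = (g, 1^{g-2})$ used inside the proof of Lemma~\ref{lem:brill-noether}.

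Granting this, Lemma~\ref{lem:brill-noether} supplies an ample class $D_{\underline{s}}$ and a constant $d > 0$ depending only on $g$ and $\epsilon$ such that $\frac{C \cdot W}{C \cdot D_{\underline{s}}} \le -d$ for all $C$ in $S$. Feeding $D = W$, $A = D_{\underline{s}}$ and this $S$ into Lemma~\ref{lem:negative} immediately gives that $W$ spans an extremal ray of $\BEff(\BM_{g,1})$.

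I expect the sole genuine obstacle to be the uniform lower bound $L > g/2$. Without the non-varying phenomenon $L$ would in principle vary across the stratum and could approach $g/2$, so no single $\epsilon$, and hence no uniform $d$, would exist; the constancy of $L$ from \cite{chenmoeller} for $g \le 4$, together with the numerical check that the value clears $g/2$, is precisely what makes the argument go through in this range and what one expects to break for larger $g$.
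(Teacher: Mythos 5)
Your reduction to Lemmas~\ref{lem:negative} and~\ref{lem:brill-noether} is exactly the paper's strategy, and your treatment of $g=2,3$ matches the paper: $\HH(2)$ and $\HH(3,1)$ are non-varying with $L=\frac{4}{3}>1$ and $L=\frac{7}{4}>\frac{3}{2}$, so every Teichm\"uller curve clears the bound $g/2$ by a fixed margin.

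The gap is in the case $g=4$. You assert that $\HH(4,1,1)$ ``lies in the non-varying range'' so that $L$ is a single constant on the stratum; this is false, and the paper says so explicitly: Teichm\"uller curves in $\HH(4,1,1)$ have \emph{varying} sums of Lyapunov exponents. As your own closing paragraph concedes, without constancy of $L$ one cannot a priori exclude Teichm\"uller curves with $L$ arbitrarily close to (or below) $2$, so no uniform $\epsilon$ and no uniform $d$ come for free, and your argument as written does not close. The paper's fix is different and is the one you need: Lemma~\ref{lem:negative} only requires a Zariski dense \emph{subfamily} of curves with the uniform negativity bound, not all of them. By \cite[Appendix A]{chenrigid} the values of $L$ for Teichm\"uller curves in $\HH(4,1,1)$ converge to the sum of Lyapunov exponents $L_{(4,1,1)}$ of the whole stratum, which by the recursion of \cite{emz} equals $\frac{1137}{550}>2$. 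Hence one can select infinitely many Teichm\"uller curves, still Zariski dense in $\HH(4,1,1)$ and therefore with dense image in $W$, all satisfying $L>2+\epsilon$ for some fixed $\epsilon>0$; feeding this subfamily into Lemmas~\ref{lem:brill-noether} and~\ref{lem:negative} completes the case $g=4$. You should replace the non-varying claim for $\HH(4,1,1)$ with this equidistribution/limit argument.
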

 
\begin{proof}
Let $C$ be a Teichm\"uller curve in $\HH(g, 1^{g-2})$. For $g=2$ and $g=3$, by \cite[Corollary 4.3, Section 5.2]{chenmoeller} we know $L$ is equal to $\frac{4}{3} > 1$ and $\frac{7}{4} > \frac{3}{2}$, respectively. Then combining Lemmas~\ref{lem:brill-noether} and~\ref{lem:negative} we conclude that $W$ is extremal in $\BM_{2,1}$ and $\BM_{3,1}$. 

For $g=4$, despite that Teichm\"uller curves in $\HH(4,1,1)$ have varying sums
$L$ of Lyapunov exponents, the limit of $L$ is equal to the sum $L_{(4,1,1)}$ of Lyapunov exponents associated to the whole stratum \cite[Appendix A]{chenrigid}. 
Based on the recursive algorithm in \cite{emz}, we know $L_{(4,1,1)} = \frac{1137}{550} > 2$ \cite[Figure 3]{chenmoeller}. It implies that we can find infinitely many Teichm\"uller curves in $\HH(4,1,1)$ such that they form a Zariski dense subset and all of them have $L > 2+\epsilon$ for some fixed $\epsilon > 0$. Then the result follows by combining Lemmas~\ref{lem:brill-noether} and~\ref{lem:negative}. 
\end{proof}

\begin{remark}
The extremality of $W$ was first showed by Rulla for $g=2$ \cite{rulla} and by Jensen for $g=3$ and $g=5$ \cite{jensen34, jensen56}, using different techniques. Theorem~\ref{thm:n=1} enriches the list by adding the case $g=4$. The question remains open to determine whether $W$ is extremal for general $g$.  
\end{remark}



Next we consider $\BM_{g,2}$. 

\begin{theorem}
\label{thm:n=2}
The divisors $BN^1_{2,(1,1)}$, $BN^1_{3, (2,1)}$, $BN^1_{4, (3,1)}$ and $BN^1_{4,(2,2)}$ are extremal. 
\end{theorem}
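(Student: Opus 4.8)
The plan is to verify, for each of the four divisors $BN^1_{2,(1,1)}$, $BN^1_{3,(2,1)}$, $BN^1_{4,(3,1)}$ and $BN^1_{4,(2,2)}$, the hypothesis of Lemma~\ref{lem:brill-noether}, namely that every Teichm\"uller curve $C$ in the relevant stratum $\HH(\underline{a},1^{g-2})$ has sum of Lyapunov exponents $L$ bounded below by $\tfrac{g}{2}+\epsilon$ for a fixed $\epsilon>0$. Once this numerical inequality is established, the extremality follows mechanically by combining Lemma~\ref{lem:brill-noether} (to produce the uniform bound $\tfrac{C\cdot BN^1_{g,\underline{a}}}{C\cdot D_{\underline{s}}}\le -d$) with Lemma~\ref{lem:negative} (whose Zariski-density hypothesis is supplied by the density of Teichm\"uller curves in each connected component of the stratum, via the dominant map $\HH(\underline{a},1^{g-2})\to BN^1_{g,\underline{a}}$). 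So the entire content of the proof reduces to controlling $L$ stratum by stratum, exactly as in Theorem~\ref{thm:n=1}.

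The strata involved are the following. For $BN^1_{2,(1,1)}$ with $g=2$ we have $\underline{a}=(1,1)$ and $g-2=0$, so the stratum is $\HH(1,1)$ and we need $L>1$. For $BN^1_{3,(2,1)}$ with $g=3$ the stratum is $\HH(2,1,1^{0})=\HH(2,1)$ and we need $L>\tfrac{3}{2}$. For $BN^1_{4,(3,1)}$ and $BN^1_{4,(2,2)}$ with $g=4$ the strata are $\HH(3,1,1,1)$ and $\HH(2,2,1,1)$ respectively, and in both cases we need $L>2$. First I would handle the genus $2$ and genus $3$ cases: here the relevant strata are expected to have non-varying (constant) $L$ along all Teichm\"uller curves, so one reads off the constant value from the tables/results of \cite{chenmoeller} (in genus $2$ the value $\tfrac{4}{3}>1$ should recur, and in genus $3$ a value such as $\tfrac{7}{4}>\tfrac{3}{2}$), and the strict inequality holds uniformly with room to spare, giving a fixed $\epsilon$.

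For the two genus $4$ divisors the situation parallels the $g=4$ case of Theorem~\ref{thm:n=1}: $L$ genuinely varies along Teichm\"uller curves in these strata, so instead of a pointwise constant I would invoke the limiting result of \cite[Appendix A]{chenrigid}, that the limit of $L$ over Teichm\"uller curves in a stratum equals the Siegel-Veech/Lyapunov sum $L_\mu$ attached to the whole stratum, computed by the Eskin-Masur-Zorich recursion \cite{emz}. I would then cite (or compute from the recursion) the values $L_{(3,1,1,1)}$ and $L_{(2,2,1,1)}$ and check each strictly exceeds $2$; since the limit is strictly above $2$, all but finitely many of the (Zariski-dense family of) Teichm\"uller curves satisfy $L\ge 2+\epsilon$ for a fixed $\epsilon>0$, and discarding finitely many curves does not destroy Zariski density. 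The main obstacle is precisely this last step: confirming that the stratum averages $L_{(3,1,1,1)}$ and $L_{(2,2,1,1)}$ are strictly greater than $2$, which rests on the numerical output of the recursion rather than on any soft argument, and on the (established) fact that the convergence of $L$ to the stratum average leaves a dense subfamily above the threshold.
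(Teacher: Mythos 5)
Your strategy is exactly the paper's: reduce each case to the lower bound $L>\tfrac{g}{2}+\epsilon$ on Teichm\"uller curves in $\HH(\underline{a},1^{g-2})$, using the constant (non-varying) value of $L$ in the low-genus strata and the convergence of $L$ to the stratum value $L_\mu$ (via \cite[Appendix A]{chenrigid} and the recursion of \cite{emz}) in the genus $4$ strata, then feed the resulting uniform negativity into Lemma~\ref{lem:negative} via Lemma~\ref{lem:brill-noether}. Two bookkeeping slips are worth correcting. First, for $BN^1_{3,(2,1)}$ the stratum is $\HH(2,1,1^{g-2})=\HH(2,1,1)$, not $\HH(2,1)$: with $g=3$ one has $1^{g-2}=1^{1}$, and indeed $\HH(2,1)$ is not a stratum of abelian differentials since $2+1\neq 2g-2$. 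Second, the constants you guessed are the ones used in Theorem~\ref{thm:n=1} for the strata $\HH(2)$ and $\HH(3,1)$; the correct values here are $L=\tfrac{3}{2}>1$ for $\HH(1,1)$ and $L=\tfrac{11}{6}>\tfrac{3}{2}$ for $\HH(2,1,1)$ (from \cite{chenmoeller}), and for genus $4$ the stratum limits are $L_{(3,1,1,1)}=\tfrac{66}{31}>2$ and $L_{(2,2,1,1)}=\tfrac{5045}{2358}>2$. Since your hedged inequalities are satisfied by the true values, the argument goes through unchanged once these numbers are substituted; your final remark about discarding finitely many curves without losing Zariski density is the same density argument the paper invokes by reference to the proof of Theorem~\ref{thm:n=1}.
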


\begin{proof}
Teichm\"uller curves in $\HH(1,1)$ and $\HH(2,1,1)$ have $L = \frac{3}{2} > 1$ \cite[Corollary 4.3]{chenmoeller} and $L= \frac{11}{6} > \frac{3}{2}$ 
\cite[Section 5.4]{chenmoeller}, respectively. By Lemmas~\ref{lem:brill-noether} and~\ref{lem:negative} we conclude the extremality for $BN^1_{2,(1,1)}$ and $BN^1_{3, (2,1)}$. 

By \cite[Figure 3]{chenmoeller} the limit of $L$ for Teichm\"uller curves in $\HH(3,1^3)$ and in $\HH(2,2,1,1)$
is equal to $\frac{66}{31} > 2$ and $\frac{5045}{2358} > 2$, respectively. By the same argument as in the proof of Theorem~\ref{thm:n=1} we conclude 
the extremality for $BN^1_{4, (3,1)}$ and $BN^1_{4,(2,2)}$. 
\end{proof}

Now let us consider $\BM_{g,3}$. 

\begin{theorem}
\label{thm:n=3}
The divisors $BN^1_{3,(1^3)}$ and $BN^1_{4,(2,1,1)}$ are extremal. 
\end{theorem}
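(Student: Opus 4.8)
The plan is to reuse the mechanism behind Theorems~\ref{thm:n=1} and~\ref{thm:n=2}: combine Lemma~\ref{lem:brill-noether} with Lemma~\ref{lem:negative}. First I would pin down, for each of the two divisors, the dominating stratum. For $BN^1_{3,(1^3)}$ we have $g=3$ and $\underline{a} = (1,1,1)$, so by the recipe of Lemma~\ref{lem:brill-noether} the relevant stratum is $\HH(\underline{a}, 1^{g-2}) = \HH(1^4)$, the principal stratum in genus $3$, and the map onto the divisor marks three of the four simple zeros of a canonical divisor. For $BN^1_{4,(2,1,1)}$ we have $g=4$ and $\underline{a} = (2,1,1)$, giving the stratum $\HH(2,1^4)$, with the double zero and two of the simple zeros marked. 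In both cases the dominance of $\HH(\underline{a}, 1^{g-2}) \to BN^1_{g,\underline{a}}$ is the one already recorded before Lemma~\ref{lem:brill-noether}, so the images of the Teichm\"uller curves in the stratum are Zariski dense in the (irreducible) divisor, furnishing the dense family $S$ required by Lemma~\ref{lem:negative}.

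The numerical crux is the Lyapunov threshold of Lemma~\ref{lem:brill-noether}, which demands $L > g/2$, i.e.\ $L > 3/2$ for $\HH(1^4)$ and $L > 2$ for $\HH(2,1^4)$. I would obtain the relevant sums from the recursive Eskin--Masur--Zorich algorithm of \cite{emz}, read off as in \cite[Figure 3]{chenmoeller}. Both strata are expected to have varying $L$ (unlike the low-genus non-varying strata invoked for Theorems~\ref{thm:n=1} and~\ref{thm:n=2}), so I would follow the $g=4$ case of Theorem~\ref{thm:n=1}: by \cite[Appendix A]{chenrigid} the limit of $L$ over Teichm\"uller curves in a stratum equals the sum of Lyapunov exponents of the whole stratum. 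Once that limiting value is shown to exceed $g/2$, there are infinitely many Teichm\"uller curves, still Zariski dense, with $L \geq g/2 + \epsilon$ for a fixed $\epsilon > 0$.

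With the threshold in hand, Lemma~\ref{lem:brill-noether} produces an ample divisor $D_{\underline{s}}$ and a constant $d > 0$ depending only on $\epsilon$, $g$, and $\underline{a}$ with
$$ \frac{C\cdot BN^1_{g,\underline{a}}}{C\cdot D_{\underline{s}}} \leq -d $$
for every $C$ in the dense family. Feeding this uniform negativity into Lemma~\ref{lem:negative} gives the extremality of $BN^1_{3,(1^3)}$ and $BN^1_{4,(2,1,1)}$, completing the proof.

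The main obstacle is entirely on the dynamical input: confirming that the limiting sums of Lyapunov exponents for $\HH(1^4)$ and $\HH(2,1^4)$ strictly exceed $3/2$ and $2$. These have to be computed through the Eskin--Masur--Zorich recursion, and the argument fails outright if either falls short; everything downstream is a formal application of the two lemmas. A minor secondary check is that, after passing to a connected component of the stratum in the Kontsevich--Zorich classification, the component still dominates the divisor, so that Zariski density of the associated Teichm\"uller curves is preserved.
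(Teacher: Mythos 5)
Your proposal follows exactly the paper's argument: the paper proves this by applying Lemmas~\ref{lem:brill-noether} and~\ref{lem:negative} to Teichm\"uller curves in $\HH(1^4)$ and $\HH(2,1^4)$, whose values of $L$ have limits $\tfrac{53}{28} > \tfrac{3}{2}$ and $\tfrac{131}{60} > 2$ respectively (read off from the figures in the cited reference), using the same limit-of-$L$ argument as in the $g=4$ case of Theorem~\ref{thm:n=1}. The only thing you leave implicit is the explicit numerical values of these limits, which the paper simply quotes.
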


\begin{proof}
The proof is the same as above by using Teichm\"uller curves in $\HH(1^4)$ whose values of $L$ have limit equal to
$\frac{53}{28} > \frac{3}{2}$ \cite[Figure 2]{chenmoeller} as well as Teichm\"uller curves in $\HH(2,1^4)$ whose values of $L$ have limit equal to
$\frac{131}{60} > 2$ \cite[Figure 3]{chenmoeller}. 
\end{proof}

Then we consider $\BM_{g,4}$. 

\begin{theorem}
\label{thm:n=4}
The divisor $BN^1_{4, (1^4)}$ is extremal. 
\end{theorem}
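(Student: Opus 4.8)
The plan is to follow the recipe already used in Theorems~\ref{thm:n=1}--\ref{thm:n=3}, realizing $BN^1_{4,(1^4)}$ as the image of a stratum that is dominated by Teichm\"uller curves. Here $g=4$ and $\underline{a}=(1,1,1,1)$, so $\sum_{i=1}^4 a_i = 4 = g$ and the relevant stratum is $\HH(\underline{a}, 1^{g-2}) = \HH(1^6)$, the principal stratum in genus four; marking four of the six simple zeros of a generating differential gives the dominant map $\HH(1^6)\to BN^1_{4,(1^4)}$. Since Teichm\"uller curves are Zariski dense in $\HH(1^6)$, their images sweep out a dense subset of $BN^1_{4,(1^4)}$, so Lemma~\ref{lem:negative} will apply once I control the intersection of such curves with the divisor.

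By Lemma~\ref{lem:brill-noether}, for a Teichm\"uller curve $C$ in $\HH(1^6)$ one has $C\cdot BN^1_{4,(1^4)} < 0$ precisely when $L > g/2 = 2$, and a uniform negativity bound $C\cdot BN^1_{4,(1^4)}/(C\cdot D_{\underline{s}}) \leq -d$ against a suitable ample perturbation $D_{\underline{s}}$ of $\lambda$ follows as soon as $L \geq 2 + \epsilon$ uniformly. Thus the whole argument reduces to the numerical inequality $L > 2$ for the curves in question.

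The subtlety, exactly as in the $g=4$ case of Theorem~\ref{thm:n=1}, is that the sum $L$ of Lyapunov exponents is no longer non-varying across Teichm\"uller curves in $\HH(1^6)$. I would therefore invoke the fact that the limit of $L$ over these curves equals the sum $L_{(1^6)}$ of Lyapunov exponents of the whole stratum, computed by the recursive algorithm of \cite{emz} and recorded in \cite[Figure 2]{chenmoeller}. Granting that this limiting value exceeds $2$, one obtains infinitely many Teichm\"uller curves forming a Zariski-dense family and all satisfying $L \geq 2 + \epsilon$ for some fixed $\epsilon > 0$; feeding these into Lemmas~\ref{lem:brill-noether} and~\ref{lem:negative} yields the extremality. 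The main obstacle is precisely the verification that $L_{(1^6)} > 2$, which rests on the \cite{emz} computation rather than on any new geometric input, so the hard part here is numerical rather than conceptual.
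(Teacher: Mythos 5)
Your proposal is correct and follows exactly the paper's argument: realize $BN^1_{4,(1^4)}$ as the image of the principal stratum $\HH(1^6)$, use the fact that $L$ for Teichm\"uller curves there limits to the stratum value (which is $\frac{839}{377} > 2$, recorded in \cite[Figure 3]{chenmoeller} rather than Figure 2), and feed the resulting dense family with $L \geq 2+\epsilon$ into Lemmas~\ref{lem:brill-noether} and~\ref{lem:negative}. The only difference is that the paper simply quotes the numerical value while you defer to the \cite{emz} computation, which is the same content.
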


\begin{proof}
The proof is the same as above by using Teichm\"uller curves in $\HH(1^6)$ whose values of $L$ have limit equal to
$\frac{839}{377} > 2$ \cite[Figure 3]{chenmoeller}. 
\end{proof}

Finally we consider $\BM_{g,5}$. 

\begin{theorem}
\label{thm:n=5}
The divisor $BN^1_{5, (1^5)}$ is extremal. 
\end{theorem}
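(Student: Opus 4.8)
The plan is to reuse verbatim the mechanism behind Theorems~\ref{thm:n=1}--\ref{thm:n=4}. Here $g=5$ and $\underline{a}=(1^5)$, so $\sum_{i=1}^5 a_i = 5 = g$, and by the discussion preceding Lemma~\ref{lem:brill-noether} the stratum dominating $BN^1_{5,(1^5)}$ is $\HH(\underline{a}, 1^{g-2}) = \HH(1^5, 1^3) = \HH(1^8)$, the principal stratum in genus $5$. One checks directly that marking five of the eight simple zeros of a differential $(X,\omega)\in\HH(1^8)$ places $(X,p_1,\ldots,p_5)$ in $BN^1_{5,(1^5)}$, since $K_X - \sum_{i=1}^5 p_i = p_6+p_7+p_8$ is effective. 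By Lemma~\ref{lem:brill-noether}, a Teichm\"uller curve $C$ in this stratum satisfies $C\cdot BN^1_{5,(1^5)} < 0$ exactly when its sum of Lyapunov exponents obeys $L > g/2 = 5/2$, so everything reduces to exhibiting a Zariski-dense family of such curves with $L$ uniformly bounded below by $5/2+\epsilon$.

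First I would appeal, as in the $g=4$ case of Theorem~\ref{thm:n=1}, to the fact that although individual Teichm\"uller curves in $\HH(1^8)$ have varying sums $L$, the limit of $L$ along them equals the sum $L_{(1^8)}$ of Lyapunov exponents of the whole stratum \cite[Appendix A]{chenrigid}. This stratum quantity is computable by the recursive algorithm of \cite{emz} and is tabulated in \cite{chenmoeller}; reading off its value and checking $L_{(1^8)} > \tfrac{5}{2}$ produces infinitely many Teichm\"uller curves that form a Zariski-dense subset of $\HH(1^8)$ and all satisfy $L \geq \tfrac{5}{2}+\epsilon$ for a fixed $\epsilon>0$. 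Their images are then Zariski-dense in the irreducible divisor $BN^1_{5,(1^5)}$, and Lemma~\ref{lem:brill-noether} supplies an ample class $D_{\underline{s}}$ together with a uniform bound
$$ \frac{C\cdot BN^1_{5,(1^5)}}{C\cdot D_{\underline{s}}} \leq -d, \qquad d>0, $$
independent of $C$. Lemma~\ref{lem:negative} then immediately yields the extremality of $BN^1_{5,(1^5)}$.

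The one genuinely delicate point is numerical rather than structural: one must certify the strict inequality $L_{(1^8)} > 5/2$. The margin by which the stratum sum $L$ exceeds $g/2$ erodes steadily as $g$ grows; for the principal strata in genera $2,3,4$ one has $L = \tfrac{3}{2}, \tfrac{53}{28}, \tfrac{839}{377}$ against the thresholds $1, \tfrac{3}{2}, 2$, so the gaps $\tfrac12, \tfrac{11}{28}, \tfrac{85}{377}$ already decrease from about $0.50$ to $0.39$ to $0.23$. For genus $5$ the gap is correspondingly thinner, so the success of the method hinges on the precise output of the Eskin--Masur--Zorich recursion rather than on any soft estimate. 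I would expect genus $5$ to be essentially the last genus for which the bare inequality $L > g/2$ survives, and hence for which this technique proves extremality of the all-ones pointed Brill--Noether divisor.
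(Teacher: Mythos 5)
Your proposal is correct and follows exactly the paper's route: reduce via Lemma~\ref{lem:brill-noether} to the inequality $L > 5/2$ for Teichm\"uller curves in the principal stratum $\HH(1^8)$, invoke the convergence of $L$ to the stratum value, and conclude with Lemma~\ref{lem:negative}. The paper simply supplies the tabulated value $L_{(1^8)} = \frac{235761}{93428} \approx 2.5235 > \frac{5}{2}$ from \cite[Figure 5]{chenmoeller}, confirming the thin margin you anticipated.
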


\begin{proof}
The proof is the same as above by using Teichm\"uller curves in $\HH(1^8)$ whose values of $L$ have limit equal to
$\frac{235761}{93428} > \frac{5}{2}$ \cite[Figure 5]{chenmoeller}. 
\end{proof}

\bibliography{my}

\end{document}